\documentclass{elsarticle}
\usepackage{amsmath}
\usepackage{amssymb}
\usepackage{amsfonts}
\usepackage{rotating}
\usepackage{graphicx}
\usepackage{floatflt,epsfig}
\usepackage{lineno,hyperref}
\usepackage{enumerate}
\usepackage{colortbl}
\usepackage{array,tabularx,tabulary,booktabs}
\usepackage{longtable}
\usepackage{multirow}
\usepackage{wrapfig}
\usepackage{subcaption}
\newcolumntype{^}{>{\currentrowstyle}}

\journal{Discrete Mathematics}
\setcounter{page}{1}

\newtheorem{remark}{Remark}

\newtheorem{theorem}{Theorem}

\newtheorem{construction}{Construction}

\bibliographystyle{elsarticle-num}

\begin{document}
\renewcommand{\abstractname}{Abstract}
\renewcommand{\refname}{References}
\renewcommand{\tablename}{Table.}
\renewcommand{\arraystretch}{0.9}
\sloppy

\begin{frontmatter}
\title{New versions of the Wallis-Fon-Der-Flaass construction to create divisible design graphs}

\author[01]{Vladislav~V.~Kabanov}
\ead{vvk@imm.uran.ru}

\address[01]{Krasovskii Institute of Mathematics and Mechanics, S. Kovalevskaja st. 16, Yekaterinburg, 620990, Russia}

\begin{abstract}
A $k$-regular graph on $v$ vertices is a {\em divisible design graph}  with parameters $(v,k,\lambda_1 ,\lambda_2 ,m,n)$ if its vertex set can be partitioned into $m$ classes of size $n$, such that any two different vertices from the same class have  $\lambda_1$ common neighbours, and any two vertices from different classes have  $\lambda_2$ common neighbours whenever it is not complete or edgeless. If $m=1$, then a divisible design graph is strongly regular with parameters $(v,k,\lambda_1,\lambda_1)$. In this paper the Wallis-Fon-Der-Flaass construction of strongly regular graphs is modified to create new constructions of divisible design graphs. In some cases, these constructions lead to strongly regular graphs.
\end{abstract}

\begin{keyword}  Divisible design graph\sep Strongly regular graph

\vspace{\baselineskip}
\MSC[2020] 05B20\sep 05C50\sep 05E30
\end{keyword}
\end{frontmatter}

\section{Introduction}

A $k$-regular graph on $v$ vertices is a {\em divisible design graph}  with parameters $(v,k,\lambda_1 ,\lambda_2 ,m,n)$ if its vertex set can be partitioned into $m$ classes of size $n$, such that any two different vertices from the same class have  $\lambda_1$ common neighbours, and any two vertices from different classes have  $\lambda_2$ common neighbours whenever it is not complete or edgeless.
The partition of a divisible design graph into  classes is called  a {\em canonical partition}.
If $m=1$, then a divisible design graph is a strongly regular with parameters $(v,k,\lambda_1,\lambda_1)$. In this paper the Wallis-Fon-Der-Flaass construction of strongly regular graphs is modified to create new constructions of divisible design graphs.
In some cases, these constructions lead to strongly regular graphs.

There is a connection between divisible design graphs and divisible designs since their vertices  can be considered as points, and the neighbourhoods of the vertices as blocks. Obviously, such a divisible design is symmetric, since it has a symmetric
incidence matrix with zero diagonal. R.C. Bose and W.S. Connor studied the combinatorial properties of divisible designs in~\cite{BC}. 

Divisible design graphs were first introduced by W.H.~Haemers, H.~Kharaghani and M.~Meulenberg in~\cite{HKM}.  In particular, the authors have proposed twenty constructions of divisible design graphs using various combinatorial structures. Some new divisible design graphs were constructed by D.~Crnkovi\'c and W.H.~Haemers in~\cite{CH}. 

The spectrum of any divisible design graph with parameters $(v,k,\lambda_1,\lambda_2,m,n)$ can be calculated using its parameters as follows:
$$\{k^1,\sqrt{k-\lambda_1}^{f_1},-\sqrt{k-\lambda_1}^{f_2},
\sqrt{k^2-\lambda_2 v}^{g_1},-\sqrt{k^2-\lambda_2 v}^{g_2}\},$$
where exponents are eigenvalue multiplicities. 
Moreover, $f_1+f_2 =  m(n - 1)$ and $g_1+g_2 = m-1$ \cite[Lemma 2.1]{HKM}.
The eigenvalues for all divisible design graphs in this article are calculated using this result.

W.D.~Wallis proposed in~\cite{W} a new construction of strongly regular graphs based on an affine designs and a Steiner 2-design. Later D.G.~Fon-Der-Flaass found how to modify a partial case of Wallis construction, when the corresponding Steiner $2$-design has blocks of size $2$,  in order to obtain hyperexponentially many strongly regular graphs with the same parameters \cite{FF}.
M.~Muzychuk in~\cite{MM} showed how to modify Fon-Der-Flaass ideas in order to cover all the cases of Wallis construction. Moreover, he showed that a Steiner 2-design in the original Wallis construction can be replaced by a partial linear space and  discovered new prolific constructions of strongly regular graphs. Some of the ideas of the above constructions turned out to be useful for new constructions of divisible design graphs as well.
Divisible design graphs are constructed in this paper by modification of the Wallis-Fon-Der-Flaass construction of strongly regular graphs. 

Just like W.D.~Wallis and D.G.~Fon-Der-Flaass we use affine designs for our construction. 

{\em  An affine design} $\mathcal{D}=(\mathcal{P}, \mathcal{B})$  with parameters $q$ and $r$ is a design, where $\mathcal{P}$ is a set of points and $\mathcal{B}$ is a set of blocks,  with the  following two properties:
\begin{itemize}
     \item[$(i)$] every two blocks are either disjoint or intersect in $r$  points;
     \item[$(ii)$] each block together with all blocks disjoint from it forms a parallel class: a set of $q$ mutually disjoint blocks partitioning all points of the design.
\end{itemize}

If $\mathcal{D}=(\mathcal{P}, \mathcal{B})$ is an affine design with parameters $q$ and $r$, then the number $\varepsilon:= (r-1)/(q-1)$ is integer and all parameters of $D$ are expressed in terms of $q$ and $r$ \cite[Lemma~1]{FF}:
   \begin{center}
    \begin{tabular}{cll}
$v$ & $q^2 r$ & the number of points;\\
$b$ & $q^3 \varepsilon + q^2 + q$ & the number of blocks;\\
$m$ & $q^2 \varepsilon + q + 1$ & the number of parallel classes;\\ 
$k$ & $q r$ & the block size;\\
$\lambda$ & $q \varepsilon + 1$ & the number of blocks containing \\
 & &   any 2 distinct points.\\
    \end{tabular}
  \end{center}

Any $d$-dimensional affine space over a finite field of order $q$  with all hyperplanes in the form of blocks is an affine design with $r=q^{d-2}$. This design has $q^d$ points, any block contains $q^{d-1}$ points,  $q^{d-2}$ points are on the intersection of any two different blocks, and the number of blocks containing any 2 distinct points is $(q^{d-1}-1)/(q-1)$. Other known examples are Hadamard $3$-designs, where~$q = 2$.

{\em A Latin square} of side $m$ is an $m\times m$ array in which each cell contains a single symbol from an $m$-set, such that each symbol occurs exactly once in each row and exactly once in each column. 
 A Latin square $\mathcal{L}=(e(i,j))_{m\times m}$ of side $m$ is {\em symmetric} if $e(i, j) = e(j, i)$ for all $i, j\in [m]$. A Latin square of side $m$ on the integers set $[m]:=\{1, 2,\dots ,m\}$ is {\em reduced} if in the first row and column the integers occur in natural order. Latin square of side $m$ is equivalent to
the multiplication table (Cayley table) of a quasigroup on $m$ elements, if we introduce a boundary row and a column. In our constructions, we use symmetric Latin squares instead of Steiner 2-designs.

We only consider undirected graphs without loops or multiple edges. For all necessary information about graphs we refer to \cite{BH}. For a survey of designs and Latin squares, we refer to \cite{CDW}

\section{First construction} 

Let $\mathcal{D}_1, \dots ,\mathcal{D}_m$ be arbitrary affine designs all with parameters $(q,q^{d-2})$, where $m=(q^d - 1)/(q-1)$ is the number of parallel classes of blocks in each $\mathcal{D}_i$. For all  $i\in [m]$, let $\mathcal{D}_i=(\mathcal{P}_i, \mathcal{B}_i)$. Parallel classes in each $\mathcal{D}_i$ are enumerated by integers from $[m]$ and
 $j$-th parallel class of $\mathcal{D}_i$ is denoted  by $\mathcal{B}_i^j$.
For any $x\in \mathcal{P}_i$, the block in the parallel class $\mathcal{B}_i^j$ which contains $x$ is denoted by $B_i^j(x)$.
\smallskip

Let $\mathcal{L}=(e(i,j))$ be a symmetric Latin square of side $m$ on the integers set $[m]$.
\smallskip

For every pair $i, j$ choose an arbitrary bijection 
$$\sigma_{i,j} : \mathcal{B}_i^{e(i,j)} \rightarrow \mathcal{B}_j^{e(j,i)}.$$ 

We require that $\sigma_{i,j}=\sigma_{j,i}^{-1}$.

\begin{construction}\label{Con1}
Let $\Gamma$ be a graph defined as follows:
\begin{itemize}
    \item The vertex set of $\Gamma$ is  $\displaystyle V=\bigcup_{i=1}^{m} \mathcal{P}_i.$ 
    \item Two different vertices $x\in \mathcal{P}_i$ and $y\in \mathcal{P}_j$ are adjacent in $\Gamma$ if and only if $$y \notin \sigma_{ij}(B_i^{e(i,j)}(x))\quad \mathrm{for\, all} \quad i,j\in [m].$$ 
\end{itemize}
\end{construction}

\begin{theorem}\label{Th1}  
If $\Gamma$ is a graph from Construction~\ref{Con1}, then  $\Gamma$ is a divisible design graph with parameters  
$$v = q^d (q^d - 1)/(q-1),\quad k = q^{d-1}(q^d - 1),$$
$$\lambda_1 = q^{d-1}(q^d - q^{d-1} - 1),\quad \lambda_2 = q^{d-2}(q-1)(q^d - 1),$$ $$m = (q^d - 1)/(q-1),\quad n = q^d.$$
Moreover, $\Gamma$ has four distinct eigenvalues  
$$\{q^d(q^{d-1} - 1),\,  q^{d-1},\,  0,\,  -q^{d-1}\}.$$
\end{theorem}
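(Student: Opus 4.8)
The plan is to verify directly the three defining features of a divisible design graph — regularity, the two common-neighbour counts relative to the partition $\{\mathcal{P}_1,\dots,\mathcal{P}_m\}$, and then to read off the spectrum from \cite[Lemma~2.1]{HKM}. The backbone of every count is one local observation, which I would establish first: for a vertex $x\in\mathcal{P}_a$ and any class index $c$ (including $c=a$, where I take $\sigma_{aa}$ to be the identity, the only choice making $\Gamma$ regular), the non-neighbours of $x$ inside $\mathcal{P}_c$ are exactly the points of the single block
\[
N_c(x):=\sigma_{ac}\!\left(B_a^{e(a,c)}(x)\right)\in\mathcal{B}_c^{e(c,a)},
\]
which has $q^{d-1}$ points. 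When $c=a$ this block is $B_a^{e(a,a)}(x)$ and contains $x$, conveniently encoding the absence of loops. Symmetry of adjacency follows because each point lies in a unique block of every parallel class and $\sigma_{ac}=\sigma_{ca}^{-1}$, so $y\in N_c(x)$ iff $N_c(x)=B_c^{e(c,a)}(y)$ iff $x\in N_a(y)$. Regularity is then immediate: summing $|\mathcal{P}_c\setminus N_c(x)|=q^d-q^{d-1}$ over the $m$ classes gives $k=m\,q^{d-1}(q-1)=q^{d-1}(q^d-1)$.

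The heart of the argument is the common-neighbour count, organised class by class via inclusion–exclusion: for $x\in\mathcal{P}_a$, $y\in\mathcal{P}_b$ the number of common neighbours in $\mathcal{P}_c$ equals $q^d-2q^{d-1}+|N_c(x)\cap N_c(y)|$, and since $x\in N_a(x)$ and $y\in N_b(y)$ the vertices $x,y$ are automatically excluded. Here I would invoke the affine-design intersection law — two blocks are disjoint, equal, or meet in exactly $r=q^{d-2}$ points — together with the fact that each row of a Latin square has distinct entries. For \emph{different} classes $a\neq b$, the blocks $N_c(x)\in\mathcal{B}_c^{e(c,a)}$ and $N_c(y)\in\mathcal{B}_c^{e(c,b)}$ lie in distinct parallel classes (as $e(c,a)\neq e(c,b)$ in row $c$), hence meet in $q^{d-2}$ points for every $c$; summing the uniform value $q^{d-2}(q-1)^2$ over all $m$ classes yields $\lambda_2=q^{d-2}(q-1)(q^d-1)$. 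For \emph{equal} classes $a=b$, $x\neq y$, the two blocks lie in the same parallel class $\mathcal{B}_c^{e(c,a)}$, so $N_c(x)\cap N_c(y)$ is either a full block (contribution $q^d-q^{d-1}$) or empty (contribution $q^d-2q^{d-1}$), the former precisely when $x,y$ share a block of $\mathcal{B}_a^{e(a,c)}$.

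Since $c\mapsto e(a,c)$ is a bijection of $[m]$, the number of classes of the first type equals the number of parallel classes of $\mathcal{D}_a$ whose blocks contain both $x$ and $y$, namely $\lambda=(q^{d-1}-1)/(q-1)$; hence $\lambda_1=\lambda\,q^{d-1}(q-1)+(m-\lambda)\,q^{d-1}(q-2)=q^{d-1}\!\left(m(q-2)+\lambda\right)=q^{d-1}(q^d-q^{d-1}-1)$, crucially independent of which two blocks $x,y$ occupy. Finally, substituting into \cite[Lemma~2.1]{HKM} gives $k-\lambda_1=q^{2d-2}$ and $k^2-\lambda_2 v=0$, so the non-trivial eigenvalues are $\pm q^{d-1}$ and $0$, with $k$ atop. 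The main obstacle I anticipate is the bookkeeping in the $\lambda_1$ computation: one must see that treating the diagonal class $c=a$ on exactly the same footing as the off-diagonal classes (via $\sigma_{aa}=\mathrm{id}$) makes the potentially case-dependent count collapse to the single constant above, and one must correctly track the affine-design block-intersection numbers throughout.
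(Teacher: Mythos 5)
Your proof is correct and follows essentially the same route as the paper's: the degree, $\lambda_1$ and $\lambda_2$ counts are organised class by class, with the same three block-intersection cases (equal blocks, disjoint blocks, or blocks in distinct parallel classes meeting in $q^{d-2}$ points) and the same use of the rows of the Latin square as bijections; your explicit handling of $\sigma_{aa}$ being the identity and of loop-avoidance only makes precise what the paper leaves tacit. One point deserves attention: your spectral computation gives largest eigenvalue $k=q^{d-1}(q^d-1)$, whereas the statement lists $q^d(q^{d-1}-1)$; since a $k$-regular graph always has $k$ as its largest eigenvalue (and \cite[Lemma 2.1]{HKM} places $k$ in the spectrum), the value printed in the theorem is a typo (it is the degree from Theorem~\ref{Th2}), and the value your computation produces is the correct one.
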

\begin{proof}
Let $\Gamma$ be a graph from Construction~\ref{Con1}. The number of vertices $v$ is equal to 
$q^d m = q^d(q^d - 1)/(q-1)$.

If $x$ is a vertex of $\Gamma$ belonging to $\mathcal{P}_i$, then
 $$\Gamma(x)= \bigcup_{j=1}^{m} (\mathcal{P}_j\setminus \sigma_{ij}(B_i^{e(i,j)}(x))).$$
Clearly, $$|\mathcal{P}_j\setminus \sigma_{ij}(B_i^{e(i,j)}(x)|=
|\mathcal{P}_j|-|\sigma_{ij}(B_i^{e(i,j)}(x)|= q^d - q^{d-1}.$$
Hence, $\Gamma$ is a regular graph of degree 
$$k = (q^d - q^{d-1})(q^d - 1)/(q-1) = q^{d-1}(q^d - 1).$$

Let $x$ and $y$ be two different vertices in $\Gamma$, belonging to the same class  $\mathcal{P}_i$. There are exactly $\lambda = (q^{d-1}-1)/(q-1)$ blocks in $\mathcal{D}_i$
containing $x$ and $y$. Thus, there are $(q^{d-1}-1)/(q-1)$ classes $\mathcal{P}_j$ in which $\sigma_{ij}(B_i^{e(i,j)}(x))$ and $\sigma_{ij}(B_i^{e(i,j)}(y))$ are the same blocks. Hence, $x$ and $y$ have exactly $q^d - q^{d-1}$ common neighbours in each of these classes. In the remaining classes $\sigma_{ij}(B_i^{e(i,j)}(x))$ and $\sigma_{ij}(B_i^{e(i,j)}(y))$  are disjoint blocks. Hence, $x$ and $y$ have exactly $q^d - 2q^{d-1}$ common neighbours in each of the remaining classes. 
Therefore, the number of common neighbours for $x$ and $y$ equals 
$$(q^d - q^{d-1}) \frac{(q^{d-1}-1)}{(q-1)} + (q^d - 2q^{d-1})(\frac{(q^d-1)}{(q-1)} - \frac{(q^{d-1}-1)}{(q-1)})= $$ $$ = q^{d-1}(q^d - q^{d-1} - 1).$$

Let $x$ be in $\mathcal{P}_i$, and $y$ be in $\mathcal{P}_j$, where $i\neq j$. 
In this case, for each $h\in [m]$ there are two blocks 
$$\sigma_{ih}(B_i^{e(i,h)}(x))\quad \mathrm{and}\quad \sigma_{jh}(B_j^{e(j,h)}(y))$$ in $\mathcal{D}_h$ which have exactly $q^{d-2}$ points in common.
 Thus, $x$ and $y$ have exactly $q^d - 2q^{d-1}+q^{d-2}$ common neighbours in each of $m=(q^d - 1)/(q-1)$ classes. Hence, the number of common neighbours for $x$ and $y$ equals 
$$(q^d - 2q^{d-1}+q^{d-2})\frac{(q^{d}-1)}{(q-1)} = q^{d-2}(q-1)(q^d - 1).$$ 
\end{proof}\hfill $\square$
\medskip

Remark that J.~Guo, K.~Wang and F. Li in \cite[Theorem 3.3]{GWL} obtained divisible design graphs  based on symplectic spaces with parameters $$\frac{q^{\nu +s}(q^{\nu -s} -1)}{(q-1)}, q^{\nu +s-1}(q^{\nu -s} -1), 
q^{\nu +s-2}(q^{\nu -s} -1)(q-1), q^{\nu +s-1}(q^{\nu -s} -q^{\nu -s} -1).$$
These parameters coincide with the parameters in Theorem \ref{Th1} whenever $s=0$.
\medskip 

We keep all the notation introduced in this section until the end of the article.

\section{Second construction}

Let $\Gamma^{\ast}$ be a divisible design graph from Construction~\ref{Con1} with parameters $(v^{\ast},k^{\ast},\lambda_1^{\ast},\lambda_2^{\ast},m^{\ast},n^{\ast})$ which was created using the Latin square $\mathcal{L^{\ast}}=(\tau^{\ast}(i,j))$.
\medskip

Let $\mathcal{L}=(e(i,j))$ be the square derived from the Latin square $\mathcal{L}^{\ast}=(\tau^{\ast}(i,j))$ by removing the row $h$ and the column $h$ for some $h$ from $[m^{\ast}]$. Since $\tau^{\ast}(i,h)$ and $\tau^{\ast}(h,i)$ are absent in $\mathcal{L}$ we can use a sort of randomness for $\mathcal{L}$ changing any $e(i,i)$ by $\tau^{\ast}(i,h)$ for $i< h$ and $e(i,i)$ by $\tau^{\ast}(i+1,h)$  for $i\geq h$. Thus, for each $h$ there are $2^{m^{\ast}-1}$ possibilities for choosing $\mathcal{L}$. 

\begin{construction}\label{Con2}
Let $\Gamma$ be a graph defined as follows:
\begin{itemize}
    \item The vertex set of $\Gamma$ is $\displaystyle V = V^{\ast}\setminus \mathcal{P}_h.$ 
   \item Two different vertices $x\in \mathcal{P}_i$ and $y\in \mathcal{P}_j$ are adjacent in $\Gamma$ if and only if $$y \notin \sigma_{ij}(B_i^{e(i,j)}(x))\quad \mathrm{for\, all} \quad i,j\in [m^{\ast}-1].$$ 
\end{itemize}
\end{construction}

\begin{theorem}\label{Th2}  
If $\Gamma$ is a graph from Construction~\ref{Con2}, then  $\Gamma$ is a divisible design graph with parameters  
$$v = q^{d+1} (q^{d-1} - 1)/(q-1),\quad k = q^d(q^{d-1} - 1),$$
$$\lambda_1 = q^d(q^{d-1} - q^{d-2} - 1),\quad \lambda_2 = q^{d-1}(q-1)(q^{d-1} - 1),$$ 
$$m = q^2(q^{d-1} - 1)/(q-1),\quad n = q^{d-1}.$$
Moreover, $\Gamma$ has four distinct eigenvalues  
$$\{q^d(q^{d-1} - 1),\,  q^{d-1},\,  0,\,  -q^{d-1}\}.$$
\end{theorem}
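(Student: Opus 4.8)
The plan is to follow the counting strategy used for Theorem~\ref{Th1}, adapting it to the two features that distinguish Construction~\ref{Con2}: the deletion of the point set $\mathcal{P}_h$ and the overwriting of the diagonal of $\mathcal{L}^\ast$. First I would record the global quantities. The vertex set loses one class of size $n^\ast=q^d$, so $v=v^\ast-q^d=q^d(q^d-1)/(q-1)-q^d=q^{d+1}(q^{d-1}-1)/(q-1)$. For the degree, observe that for $x\in\mathcal{P}_a$ and any surviving class $\mathcal{P}_c$ (including $c=a$, where I take $\sigma_{aa}$ to fix each block setwise, e.g. the identity, so that $x$ lies in its own excluded block) the neighbours of $x$ in $\mathcal{P}_c$ are the complement of the single block $\sigma_{ac}(B_a^{e(a,c)}(x))$ and so number $q^d-q^{d-1}$. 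Summing over the $m^\ast-1=q(q^{d-1}-1)/(q-1)$ surviving classes gives $k=(q^d-q^{d-1})\cdot q(q^{d-1}-1)/(q-1)=q^d(q^{d-1}-1)$.

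The key structural point is the identification of the canonical partition. Deleting row and column $h$ and then replacing the diagonal entry by $\tau^\ast(a,h)$ has the effect that, for each surviving class $\mathcal{P}_a$, the labels $\{e(a,c)\}$ occurring in the corresponding row of $\mathcal{L}$ are exactly $[m^\ast]\setminus\{\tau^\ast(a,a)\}$; that is, a single parallel-class label is absent, and it is precisely $\tau^\ast(a,a)$. Consequently the parallel class $\mathcal{B}_a^{\tau^\ast(a,a)}$ of $\mathcal{D}_a$ is never used in any adjacency rule, and I claim its $q$ blocks (each of size $qr=q^{d-1}$) are the canonical classes. This gives $m=q(m^\ast-1)=q^2(q^{d-1}-1)/(q-1)$ classes of size $n=q^{d-1}$, as stated.

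For the common neighbours I would reuse the affine-design intersection dichotomy exactly as in Theorem~\ref{Th1}. Writing $A_c(x)=\sigma_{ac}(B_a^{e(a,c)}(x))$, the number of common neighbours of $x,y$ in $\mathcal{P}_c$ is $q^d-|A_c(x)\cup A_c(y)|$, and the union has size $2q^{d-1}$, $2q^{d-1}-q^{d-2}$, or $q^{d-1}$ according as the two blocks are disjoint, meet in $r=q^{d-2}$ points, or coincide. When $x\in\mathcal{P}_a$, $y\in\mathcal{P}_b$ with $a\neq b$, distinctness of the entries in row $c$ of $\mathcal{L}$ forces $e(c,a)\neq e(c,b)$, so the two blocks lie in different parallel classes of $\mathcal{D}_c$ for every $c$; the computation is identical to Theorem~\ref{Th1} and yields $\lambda_2=q^{d-1}(q-1)(q^{d-1}-1)$. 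When $x,y$ lie in the same $\mathcal{P}_a$, the blocks $A_c(x),A_c(y)$ always share the parallel class $e(c,a)$, hence either coincide or are disjoint; writing $s$ for the number of columns on which they coincide, the total is $(m^\ast-1)(q^d-2q^{d-1})+s\,q^{d-1}$. Since $\{e(a,c)\}=[m^\ast]\setminus\{\tau^\ast(a,a)\}$ while the set of labels $t$ with $B_a^t(x)=B_a^t(y)$ has size $\lambda=(q^{d-1}-1)/(q-1)$, one gets $s=\lambda$ when $x,y$ lie in different blocks of $\mathcal{B}_a^{\tau^\ast(a,a)}$ and $s=\lambda-1$ when they lie in the same block. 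Substituting, the first case reproduces $\lambda_2$ and the second gives $\lambda_1=q^d(q^{d-1}-q^{d-2}-1)$, precisely the within-class/between-class split dictated by the canonical partition above.

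The main obstacle is exactly this bookkeeping: tracking, through the deletion and the diagonal overwrite, that the unique absent label of row $a$ is $\tau^\ast(a,a)$, so that sharing a block of $\mathcal{B}_a^{\tau^\ast(a,a)}$ is what toggles $s$ between $\lambda-1$ and $\lambda$; once this is pinned down, the intersection counts are routine. Finally the eigenvalues follow from \cite[Lemma 2.1]{HKM}: direct substitution gives $k-\lambda_1=q^d\cdot q^{d-2}=q^{2d-2}$, so $\pm\sqrt{k-\lambda_1}=\pm q^{d-1}$, and $k^2-\lambda_2 v=q^{2d}(q^{d-1}-1)^2-q^{2d}(q^{d-1}-1)^2=0$, giving the remaining eigenvalue $0$; together with $k=q^d(q^{d-1}-1)$ this yields the four stated values.
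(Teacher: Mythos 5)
Your proposal is correct and follows essentially the same route as the paper: both identify, for each surviving class $\mathcal{P}_a$, the unique parallel-class label absent from the corresponding row of $\mathcal{L}$, take the $q$ blocks of that parallel class as the canonical classes, and then count common neighbours class-by-class via the coincide/disjoint/meet-in-$r$ trichotomy of affine blocks; the only organizational difference is that the paper phrases $\lambda_1,\lambda_2$ as corrections to $\lambda_1^{\ast},\lambda_2^{\ast}$ (subtracting the contribution of the deleted class $\mathcal{P}_h$), while you recount from scratch as in Theorem~\ref{Th1} — a cosmetic distinction, and your treatment of the $\sigma_{aa}$ convention and of the eigenvalue check is in fact more explicit than the paper's. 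One caveat: you hard-wire the assumption that \emph{every} diagonal entry of $\mathcal{L}$ is overwritten by $\tau^{\ast}(a,h)$, whereas Construction~\ref{Con2} allows an arbitrary subset of the diagonal to be overwritten (hence the $2^{m^{\ast}-1}$ variants); for a row whose original diagonal $\tau^{\ast}(a,a)$ is kept, the unique absent label is $\tau^{\ast}(a,h)$ rather than $\tau^{\ast}(a,a)$, and the canonical class of $\mathcal{P}_a$ is cut out by the parallel class $\mathcal{B}_a^{\tau^{\ast}(a,h)}$. Since your counting argument never uses which label is missing, only that exactly one is missing per row, this is a one-line repair — and, for what it is worth, the paper's own proof has the mirror-image limitation, as its differencing argument implicitly assumes the diagonal entries are all kept.
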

\begin{proof}
Let $\Gamma$ be a graph from Construction~\ref{Con2}. The number of vertices $v$ is equal to $$v^{\ast}-q^d = q^{d+1} \frac{(q^{d-1} - 1)}{(q-1)}.$$
If $x$ is a vertex of $\Gamma$ belonging to $\mathcal{P}_i$, then
 there are $k^{\ast}-(q^d - q^{d-1})$ vertices in $\Gamma(x)$.
Hence, $\Gamma$ is a regular graph of degree $$k=(q^d - q^{d-1})(m^{\ast}-1) = q^d(q^{d-1} - 1).$$

Let $x$ and $y$ be two different vertices from $\mathcal{D}_i$ belonging to one block of $h$-th parallel class. 
Since $\mathcal{P}_h$ is not in $\Gamma$ there are $\lambda -1 =(q^{d-1}-1)/(q-1)-1$ classes $\mathcal{P}_j$, where $\sigma_{ij}(B_i^{e(i,j)}(x))$ and $\sigma_{ij}(B_i^{e(i,j)}(y))$ are the same. 
Hence, the difference from $\lambda^{\ast}$ and $\lambda$ equals $q^d - q^{d-1}$.
Therefore, the number of common neighbours of $x$ and $y$ in $\Gamma$ equals 
$$\lambda_1^{\ast}-(q^d - q^{d-1}) = q^d(q^{d-1} - q^{d-2} - 1).$$ 

Let $x$ and $y$ don't lie on one block of $h$-th parallel class in $\mathcal{P}_i$ or $x\in \mathcal{P}_i$ and $y\in \mathcal{P}_j$, $i\neq j$.
In both cases, $x$ and $y$ have exactly $q^d - 2q^{d-1}+q^{d-2}$ common neighbours in each part of $V$. 
Hence, the number of common neighbours of $x$ and $y$ is equal to $$\lambda_2^{\ast}-(q^d - 2q^{d-1}+q^{d-2})=q^{d-1}(q-1)(q^{d-1}r - 1).$$ 

Since $x$ and $y$ have the same number of common neighbours if and only if they belong to one block in $h$-th parallel class in $\mathcal{P}_i$ for all $i\in [m^{\ast}]\setminus \{h\}$, then $\Gamma$ is a divisible design graph with classes on all these blocks.
Therefore, $n=q^{d-1}$ and $m=v/n=q^2(q^{d-1} - 1)/(q-1)$.
\end{proof}\hfill $\square$

\section{Third construction}

Let $\mathcal{D}_1, \dots ,\mathcal{D}_{m+1}$ be arbitrary affine designs all with parameters $(q,q^{d-2})$, where $m=(q^d - 1)/(q-1)$ is the number of parallel classes of blocks in each $\mathcal{D}_i$. For all  $i\in [m+1]$, let $\mathcal{D}_i=(\mathcal{P}_i, \mathcal{B}_i)$. Parallel classes in each $\mathcal{D}_i$ are enumerated by integers from $[m]$.
\medskip

Let $\mathcal{L}=(e(i,j))$ be a symmetric Latin square of side $m+1$ on the integers set $[m+1]$.

\begin{construction}\label{Con3}
Let $\Gamma$ be a graph defined as follows:
\begin{itemize}
    
    \item The vertex set of $\Gamma$ is  $\displaystyle V=\bigcup_{i=1}^{m+1} \mathcal{P}_i.$
    \item Two different vertices $x\in \mathcal{P}_i$ and $y\in \mathcal{P}_j$ are adjacent in $\Gamma$ if and only if $$y \notin \sigma_{ij}(B_i^{e(i,j)}(x))\quad \mathrm{for\, all} \quad i,j\in [m+1].$$ If $e(i,j)=m+1$, then there are no edges between $\mathcal{P}_i$ and $\mathcal{P}_j$.
\end{itemize}
\end{construction}
\begin{theorem}\label{Th3}  
Let $\Gamma$ be a graph from Construction~\ref{Con3}. If $q\neq 2$, then  $\Gamma$ is a divisible design graph with parameters 
$$v = q^d(q^d +q-2)/(q-1),\quad k = q^{d-1}(q^d - 1),$$
$$\lambda_1 = q^{d-1}(q^d - q^{d-1} - 1),\quad \lambda_2 = q^{d-1}(q - 1)(q^{d-1} - 1),$$ 
$$m = (q^d +q-2)/(q-1),\quad n = q^d.$$
Moreover, $\Gamma$ has five distinct eigenvalues  
$$\{q^{d-1}(q^d - 1),\, (q-1)q^{d-1},\,  q^{d-1},\, -q^{d-1},\, -(q-1)q^{d-1}\}.$$
If $q=2$, then $\Gamma$ is strongly regular graph with parameters
$$(2^{2d}, 2^{2d-1} - 2^{d-1}, 2^{2d-2} - 2^{d-1}, 2^{2d-2} - 2^{d-1}).$$
\end{theorem}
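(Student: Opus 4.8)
The plan is to follow the template of the proof of Theorem~\ref{Th1}, treating the symbol $m+1$ in the Latin square as a ``missing'' parallel class: whenever $e(i,j)=m+1$ there are simply no edges between $\mathcal{P}_i$ and $\mathcal{P}_j$. The first thing I would record is structural. Since $\mathcal{L}$ is a symmetric Latin square of side $m+1$, the symbol $m+1$ occurs exactly once in each row and once in each column, so for every $i$ there is a \emph{unique} index $j_0(i)$ with $e(i,j_0(i))=m+1$. Counting vertices is then immediate, $v=q^d(m+1)=q^d(q^d+q-2)/(q-1)$. For the degree I would argue exactly as in Theorem~\ref{Th1}: each class $\mathcal{P}_j$ with $e(i,j)\neq m+1$ contributes $q^d-q^{d-1}$ neighbours of a fixed $x\in\mathcal{P}_i$, while the single class $\mathcal{P}_{j_0(i)}$ contributes none, giving $k=m(q^d-q^{d-1})=q^{d-1}(q^d-1)$.

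For two vertices $x,y$ in the same class $\mathcal{P}_i$ I would reuse the affine-design count: the $\lambda=(q^{d-1}-1)/(q-1)$ blocks through both $x$ and $y$ lie in $\lambda$ distinct parallel classes (two blocks through both points cannot be parallel), so exactly $\lambda$ of the $m$ genuine parallel classes of $\mathcal{D}_i$ put $x,y$ in a common block and the remaining $m-\lambda=q^{d-1}$ separate them into disjoint blocks. As $j$ runs over $[m+1]\setminus\{j_0(i)\}$ the symbols $e(i,j)$ hit each genuine parallel class once, so these $m$ classes reproduce precisely the sum of Theorem~\ref{Th1}, namely $(q^d-q^{d-1})\lambda+(q^d-2q^{d-1})(m-\lambda)$, while $\mathcal{P}_{j_0(i)}$ contributes $0$. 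Because the dropped class contributes nothing in either setting, the total is unchanged and equals $\lambda_1=q^{d-1}(q^d-q^{d-1}-1)$.

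The step I expect to be the crux is the count of common neighbours of $x\in\mathcal{P}_i$ and $y\in\mathcal{P}_j$ with $i\neq j$. For a third class $\mathcal{P}_h$ the two relevant blocks are $\sigma_{ih}(B_i^{e(i,h)}(x))$ and $\sigma_{jh}(B_j^{e(j,h)}(y))$, lying in the parallel classes $e(h,i)$ and $e(h,j)$ of $\mathcal{D}_h$. I would establish two facts. First, $\mathcal{P}_h$ yields the generic count $q^d-2q^{d-1}+q^{d-2}$ precisely when $e(i,h)\neq m+1$ and $e(j,h)\neq m+1$; if either symbol equals $m+1$ then one of $x,y$ has no neighbour in $\mathcal{P}_h$ and the contribution is $0$. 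Second, the number of ``bad'' classes is exactly two: the unique $h$ with $e(i,h)=m+1$ and the unique $h$ with $e(j,h)=m+1$ are distinct, since otherwise $m+1$ would occur twice in one column of $\mathcal{L}$. For each of the remaining $m-1$ classes I would use $e(h,i)\neq e(h,j)$ (distinct entries of row $h$, as $i\neq j$) to conclude that the two blocks lie in different parallel classes and hence meet in exactly $q^{d-2}=r$ points, whose union has size $2q^{d-1}-q^{d-2}$, leaving the generic complement $q^d-2q^{d-1}+q^{d-2}$. Summing over the $m-1$ good classes gives $\lambda_2=(q^d-2q^{d-1}+q^{d-2})(m-1)=q^{d-1}(q-1)(q^{d-1}-1)$.

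These computations would show that the partition $\{\mathcal{P}_i\}$ into $m+1=(q^d+q-2)/(q-1)$ classes of size $n=q^d$ is a canonical partition realizing the stated $\lambda_1,\lambda_2$, so $\Gamma$ is a divisible design graph with the claimed parameters. Since $\lambda_1-\lambda_2=q^{d-1}(q-2)$, the two values differ exactly when $q\neq 2$, which is the proper divisible design case; the eigenvalues then follow from the spectrum formula of \cite[Lemma~2.1]{HKM}, because $k-\lambda_1=q^{2d-2}$ gives $\pm\sqrt{k-\lambda_1}=\pm q^{d-1}$ and a short simplification gives $k^2-\lambda_2 v=q^{2d-2}(q-1)^2$, hence $\pm\sqrt{k^2-\lambda_2 v}=\pm(q-1)q^{d-1}$, producing the five distinct values together with $k$. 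When $q=2$ one has $\lambda_1=\lambda_2$, so every pair of distinct vertices has the same number of common neighbours, the pairs $\pm q^{d-1}$ and $\pm(q-1)q^{d-1}$ coincide, $\Gamma$ has only three distinct eigenvalues, and $\Gamma$ is strongly regular with the parameters obtained by substituting $q=2$.
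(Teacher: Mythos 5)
Your proposal is correct and follows essentially the same route as the paper's proof: count vertices and degree as in Theorem~\ref{Th1}, observe that the same-class count is unchanged because the class with symbol $m+1$ contributes nothing, count the cross-class common neighbours over the $m-1$ classes not "blocked" by the symbol $m+1$, and note $\lambda_1=\lambda_2$ when $q=2$. In fact you supply details the paper leaves implicit — that the two excluded classes are distinct by the Latin square column property, that non-parallel blocks force the generic intersection count, and the explicit verification $k-\lambda_1=q^{2d-2}$ and $k^2-\lambda_2 v=q^{2d-2}(q-1)^2$ giving the stated spectrum.
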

\begin{proof}
Let $\Gamma$ be a graph from Construction~\ref{Con3}. The number of vertices $v$ is equal to 
$$ q^d ( m+1) = q^d(q^d +q-2)/(q-1).$$
If $x$ is a vertex of $\Gamma$ belonging to $\mathcal{P}_i$, then
 $$\Gamma(x)= \bigcup_{j=1,\, e(i,j)\neq m+1}^{m+1} (\mathcal{P}_j\setminus \sigma_{ij}(B_i^{e(i,j)}(x))).$$
Hence, $\Gamma$ is a regular graph of degree 
$$k = (q^d - q^{d-1})(q^d - 1)/(q-1) = q^{d-1}(q^d - 1).$$

Let $x$ and $y$ be two different vertices belonging to  $\mathcal{P}_i$. Since there are no edges between $\mathcal{P}_i$ and $\mathcal{P}_j$ if  $e(i,j)=m+1$, the number of common neighbours for $x$ and $y$ equals $q^{d-1}(q^d-q^{d-1} - 1)$ exactly the same as in Theorem~\ref{Th1}.

Let $x$ be in $\mathcal{P}_i$, and $y$ be in $\mathcal{P}_j$, where $i\neq j$. 
In this case, $x$ and $y$ have exactly $q^d - 2q^{d-1}+q^{d-2}$ common neighbours in each class $\mathcal{P}_h$, where $e(i,h) \neq m+1$ or $e(h,j) \neq m+1$. 
 Hence, the number of common neighbours for $x$ and $y$ equals 
$$(q^d - 2q^{d-1}+q^{d-2})\left(\frac{(q^{d}-1)}{(q-1)} - 1 \right)=q^{d-1}(q - 1)(q^{d-1} - 1).$$ 
If $q=2$, then $\lambda_1 = \lambda_2$ and $\Gamma$ is strongly regular graph.
\end{proof}\hfill $\square$

\section{Fourth construction}

Let $\mathcal{D}_1, \dots ,\mathcal{D}_{m+1}$ be arbitrary affine designs all with parameters $(q,q^{d-2})$, where $m=(q^d - 1)/(q-1)$ is the number of parallel classes of blocks in each $\mathcal{D}_i$. For all  $i\in [m+1]$, let $\mathcal{D}_i=(\mathcal{P}_i, \mathcal{B}_i)$. Parallel classes in each $\mathcal{D}_i$ are enumerated by integers from $[m]$.
\medskip

Let $\mathcal{L}=(e(i,j))$ be a symmetric Latin square of side $m+1$ and
there are no entries $m+1$ on the main diagonal.

\begin{construction}\label{Con4}
The graph $\Gamma$ is defined as follows:
\begin{itemize}
    
    \item The vertex set of $\Gamma$ is  $\displaystyle V=\bigcup_{i=1}^{m+1} \mathcal{P}_i.$
    \item Two vertices $x\in \mathcal{P}_i$ and $y\in \mathcal{P}_j$ are adjacent in $\Gamma$ if and only if $$y \notin \sigma_{ij}(B_i^{e(i,j)}(x))\quad \mathrm{for\, all} \quad i,j\in [m+1].$$ If $\ell(i,j)=m+1$, then every vertex from $\mathcal{P}_i$ is adjacent to all vertices from $\mathcal{P}_j$.
\end{itemize}
\end{construction}
\begin{theorem}\label{Th4}  
If $\Gamma$ is a graph from Construction~\ref{Con4}, then  $\Gamma$ is a strongly regular graph with parameters $(v, k, \lambda, \mu)$, where 
$$v = q^d(q^d +q-2)/(q-1),\quad k = q^{d-1}(q^d + q - 1),$$
$$\lambda =  \mu = q^{d-1}(q - 1)(q^{d-1} + 1).$$ 
\end{theorem}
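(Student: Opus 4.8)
The plan is to reuse the class-by-class counting of Theorems~\ref{Th1} and~\ref{Th3}, since Construction~\ref{Con4} differs from Construction~\ref{Con3} only in how the entries $e(i,j)=m+1$ act: Construction~\ref{Con3} deletes all edges between $\mathcal{P}_i$ and $\mathcal{P}_j$, whereas Construction~\ref{Con4} inserts all of them. Because $\mathcal{L}$ is a symmetric Latin square of side $m+1$ with no $m+1$ on the main diagonal, each row and each column contains the symbol $m+1$ exactly once and off the diagonal; this ``exactly once per line'' property is the combinatorial engine of the whole argument.

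First I would compute the degree of a vertex $x\in\mathcal{P}_i$. Exactly one index $h_0$ satisfies $e(i,h_0)=m+1$, and $h_0\neq i$; the class $\mathcal{P}_{h_0}$ contributes all $q^d$ of its vertices to $\Gamma(x)$, while each of the remaining $m$ classes contributes $q^d-q^{d-1}$ neighbours exactly as in Theorem~\ref{Th1}. Hence
\[
k=(q^d-q^{d-1})\,m+q^d=q^{d-1}(q^d-1)+q^d=q^{d-1}(q^d+q-1).
\]

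Next I would count the common neighbours of two distinct vertices, splitting into same-class and different-class pairs. For $x,y\in\mathcal{P}_i$ the single special class $\mathcal{P}_{h_0}$ now supplies $q^d$ common neighbours, while the other $m$ classes reproduce the same-class total $q^{d-1}(q^d-q^{d-1}-1)$ of Theorem~\ref{Th1}; adding $q^d$ and factoring gives $q^{d-1}(q-1)(q^{d-1}+1)$. For $x\in\mathcal{P}_i$ and $y\in\mathcal{P}_j$ with $i\neq j$, the Latin-square column property forces the indices $h_1$ with $e(i,h_1)=m+1$ and $h_2$ with $e(j,h_2)=m+1$ to be distinct; in each of $\mathcal{P}_{h_1},\mathcal{P}_{h_2}$ one of the two vertices is joined to everything, so that class contributes $q^d-q^{d-1}$ common neighbours, and the remaining $m-1$ classes each contribute $q^{d-2}(q-1)^2$, reproducing the different-class total $q^{d-1}(q-1)(q^{d-1}-1)$ of Theorem~\ref{Th3}. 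Adding $2(q^d-q^{d-1})$ and factoring again gives $q^{d-1}(q-1)(q^{d-1}+1)$.

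The decisive point is that these two totals agree. Passing from Construction~\ref{Con3} to Construction~\ref{Con4} raises the same-class count by $q^d$ (one newly complete class) and the different-class count by $2(q^d-q^{d-1})$ (two newly complete classes); the difference $2(q^d-q^{d-1})-q^d=q^d-2q^{d-1}$ of these increments is exactly the gap $\lambda_1-\lambda_2$ of Theorem~\ref{Th3}, so the modification equalises the two counts. Every pair of distinct vertices therefore has the same number $\lambda=\mu=q^{d-1}(q-1)(q^{d-1}+1)$ of common neighbours, and together with the degree computation this shows $\Gamma$ is strongly regular with the stated parameters. I expect the only genuine care to be in the combinatorial bookkeeping—verifying that a same-class pair meets exactly one special class and a different-class pair exactly two, and in particular that $h_1\neq h_2$—since the equality $\lambda=\mu$ rests entirely on that count.
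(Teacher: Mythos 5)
Your proposal is correct and follows essentially the same route as the paper's own proof: compute the degree as $q^d$ plus the Theorem~\ref{Th1} count, get the same-class count as $q^d + q^{d-1}(q^d-q^{d-1}-1)$, and get the different-class count as $2(q^d-q^{d-1})$ plus $(m-1)$ classes contributing $q^{d-2}(q-1)^2$ each, both equal to $q^{d-1}(q-1)(q^{d-1}+1)$. Your explicit verification that the two special indices $h_1,h_2$ are distinct (via the Latin-square column property) is a detail the paper leaves implicit, but it does not change the argument.
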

\begin{proof}
Let $\Gamma$ be a graph from Construction~\ref{Con4}. 
If $x$ is a vertex from $\mathcal{P}_i$, then there exists the only $h\in [m+1]$ such that
$e(i,j) = m+1$. Hence, $$\Gamma(x)=\mathcal{P}_h\cup \bigcup_{j=1,\, j\neq h}^{m+1} (\mathcal{P}_j\setminus \sigma_{ij}(B_i^{e(i,j)}(x))).$$
Therefore, $\Gamma$ is a regular graph of degree 
$$k=q^d+(q^d - q^{d-1})(q^d - 1)/(q-1) =q^{d-1}(q^d + q - 1).$$

Let $x$ and $y$ be two different vertices in $\Gamma$, belonging to the same class  $\mathcal{P}_i$. 
Since every vertex from $\mathcal{P}_i$ is adjacent to all vertices from $\mathcal{P}_j$ in the case $e(i,j)=m+1$ then the number of common neighbours for $x$ and $y$ equals 
$$q^d + q^{d-1}(q^d-q^{d-1} - 1)=q^{d-1}(q-1)(q^{d-1} +1).$$

Let $x$ be in $\mathcal{P}_i$, and $y$ be in $\mathcal{P}_j$, where $i\neq j$. 
In this case, $x$ and $y$ have exactly $q^d - 2q^{d-1}+q^{d-2}$ common neighbours in each class $\mathcal{P}_h$, where $e(i,h) \neq m+1$ or $e(h,j) \neq m+1$ and $q^d-q^{d-1}$ common neighbours in $\mathcal{P}_h$, where $e(i,h) = m+1$ or $e(h,j) = m+1$. 
 Hence, the number of common neighbours for $x$ and $y$ equals 
$$2(q^d - q^{d-1}) + ((q^d - 2q^{d-1}+q^{d-2})\left(\frac{(q^{d}-1)}{(q-1)} - 1 \right)=$$
$$=q^{d-1}(q - 1)(q^{d-1} + 1).$$ 
Thus, $\Gamma$ is strongly regular graph.
\end{proof}\hfill $\square$

\begin{remark}
The complement of a graph from Construction \ref{Con4} is a strongly regular graph with  parameters 
$$(q^d\frac{(q^d +q-2)}{(q-1)},\, q^{d-1}\frac{( q^d  - 1))}{(q-1)}-1,\, q^{d-1}\frac{(q^{d-1} - 1)}{(q-1)} -2,\, q^{d-1}\frac{(q^{d-1} - 1)}{(q-1)} ).$$
These parameters were first obtained in Construction B  of type $S6$ in the paper \cite{MM} by  M.~Muzychuk. We were unable to check if the complement of any graph from Construction~\ref{Con4} can be obtained from Construction~S6.
\end{remark}

\section{Hadamard matrices and p-rank}

Let $J$ is the all ones matrix and $I$ is the identity matrix.

A square $(+1,-1)$-matrix $H$ is called a {\em Hadamard matrix} of order $n$ whenever $HH^T = nI$. A Hadamard matrix $H$ is called
{\em graphical} if $H$ is symmetric and it has constant diagonal, and $H$ is {\em regular} if all row and column sums are equal. 

In Section 4 of the paper \cite{AH} we find the following statement.
If $H$ is a graphical regular Hadamard matrix with the diagonal entries are equal to $1$,
then $\cfrac{1}{2} (J - H)$ is the adjacency matrix of 
a strongly regular graph with parameters 
$$(n,\, \cfrac{n}{2}\mp \cfrac{\sqrt{n}}{2},\, \cfrac{n}{4}\mp 
\cfrac{\sqrt{n}}{2},\, \cfrac{n}{4}\mp \cfrac{\sqrt{n}}{2}).$$ 
Conversely, any strongly regular graph with one of two these tuples of parameters comes from a Hadamard matrix in the described way.

If $n=2^{2d}$, then strongly regular graphs have parameters 
$$(2^{2d}, 2^{2d-1} \mp 2^{d-1}, 2^{2d-2} \mp 2^{d-1}, 2^{2d-2} \mp 2^{d-1}).$$
Thus, Theorem \ref {Th3} and Theorem \ref {Th4} are proposed as another point of view on these graphs in the even case.

One of the well-studied and useful parameters of a graph is the $p$-rank, that is, the rank of its adjacency matrix over the field with $p$-elements~\cite{BvE}. The $p$-rank can be useful to distinguish divisible design graphs and strongly regular graphs with the same parameters and spectrum \cite{AH}.

Variation of the numbering of parallel classes of blocks in the designs for all our constructions does not change the parameters and spectrum of the adjacency matrix of the resulting graph. Nevertheless, such variation can change the $2$-rank, in which case the new graph is obviously not isomorphic to the original one. 

\section{Examples of divisible design graphs from Construction~\ref{Con1},  \ref{Con2} and  \ref{Con3}}

All feasible parameter and known divisible design graphs with at most 27 vertices are given in~\cite{CH, HKM}. Divisible design graphs were studied in master's thesis by   M.A.~Meulenberg in~\cite{M2008}, where one can find feasible parameters of such graphs up to 50 vertices.
All divisible design graphs with at most 39 vertices, except for three tuples of parameters: $(32,15,6,7,4,8)$, $(32,17,8,9,4,8)$, $(36,24,15,16,4,9)$ were found by D.I.~Panasenko and L.V.~Shalaginov in~\cite{PSh}. 
Due to the large number of examples in this and the following  sections, we will only focus on examples derived from reduced Latin squares.

\subsection{q=2, d=2} 

By Theorem~\ref{Th1} and Theorem~\ref{Th2}  divisible design graphs have parameters $(12,6,2,3,3,4)$ and $(8,4,0,2,4,2)$, respectively. 
Only line graph of the octahedron has the first one, and $K_4\times K_2$ has the second one.

\subsection{q=3, d=2}

 By Theorem~\ref{Th1} and Theorem~\ref{Th2}, parameters of divisible design graphs are $(36,24,15,16,4,9)$ and $(27,18,9,12,18,3)$, respectively. 
There are two symmetric Latin squares of side $4$, which are the multiplication tables for $C_4$ the cyclic group of order $4$ and the Klein four-group. These Latin squares give two divisible design graphs with parameters $(36,24,15,16,4,9)$. Both of these graphs are known as Cayley graphs from~\cite[Example 4.2]{KSh}.

There are two known distant-regular graphs with intersection array	$\{8,6,1;1,3,8\}$.
The complement of these graphs are divisible design graphs with parameters  $(27,18,9,12,18,3)$.
We can obtain both of these graphs using Theorem~\ref{Th2} as well. 

\subsection{q=2, d=3} 

By Theorem~\ref{Th1}  divisible design graphs have parameters $(56,28,14,12,7,8)$. 
There are seven symmetric Latin squares of side $7$ (see for example Table 1.18 in Part III~\cite{CDW}).
We use $7$ copies of a $3$-dimensional point-hyperplane affine design over a finite field of order $2$ and identity for all bijections. For each parallel class of blocks take adjacency matrix of the complete multipartite graph on the vertex set of the design with parts on parallel blocks. We get seven matrices enumerated by parallel classes. Changing any entries of Latin squares by the matrix with the number equals of the entry we have got seven adjacency matrices of divisible design graphs  with parameters 
$(56,28,14,12,7,8)$. We used the computer system GAP with Package GRAPE \cite{GAP, GRAPE} to found these graphs. 

Divisible design graphs from the second and fourth Latin squares, as well as the sixth and seventh one are isomorphic. Hence, we obtained $5$  non-isomorphic divisible design graphs with  parameters $(56,28,14,12,7,8)$.

Any variation of the numbering of parallel classes in the starting design often gives non-isomorphic graphs. 

By Theorem~\ref{Th2} we have parameters $(48,24,8,12,12,4)$ and obtained lots of non-isomorphic divisible design graphs with such parameters. 

By Theorem~\ref{Th3} we have parameters $(45, 24, 15, 12, 5, 9)$. There are one symmetric Latin square of side $5$ (see for example Table 1.18 in Part III~\cite{CDW}). The graph we obtained using this Latin square from Construction~\ref{Con3} has the automorphism group $(C_3 \times C_3) : C_8$.

All graphs with parameters $(56,28,14,12,7,8)$, $(48,24,8,12,12,4)$  and $(45, 24, 15, 12, 5, 9)$  seem to be new and are available on the website \cite{DezaGraphs}.

\section{Examples of strongly regular graphs from Construction \ref{Con3} and \ref{Con4} }

\subsection{q=2, d=3} 

A.~Abiad, S. Butler and W.H.~Haemers in \cite{AH} and F.~Ihringer \cite{FI} provided an abundance of 
strongly regular graphs with parameters $(64,28,12,12)$. 
In the libraries of small loops in GAP package LOOPS \cite{LOOPS} we found symmetric Latin squares as multiplication tables of abelian loops of order 8. We used the multiplication tables of three abelian groups of order $8$ as well. Using these Latin squares we obtained  fifteen non-isomorphic graphs with parameters $(64,28,12,12)$ from Construction~\ref{Con3}.
The following table is contain the results, where the first column gives the $2$-rank, and the second column gives the structure description of the automorphism group.  
\medskip 

{\small 
    \begin{tabular}{ll}
   8  &  $(C_2 \times C_2 \times C_2 \times C_2 \times C_2 \times C_2) : S_8$;\\
   8  &  $((((((C_2 \times C_2 \times (((C_2 \times C_2 \times C_2 \times C_2) : C_2) : C_2)) : C_2) : C_2) : C_2) : C_2) : C_3) : C_2$;\\
 10  &  $C_2 \times C_2 \times C_2 \times D_8$;\\
  10  & $C_2 \times C_2 \times (((C_2 \times C_2 \times C_2 \times C_2) : C_3) : C_2)$;\\
  10  & $C_2 \times (((((C_2 \times ((C_2 \times C_2 \times C_2 \times C_2) : C_2)) : C_2) : C_3) : C_2) : C_2)$;\\
  12  &   $C_2 \times C_2 \times C_2 \times D_8$;\\
  12  &   $C_2 \times C_2 \times C_2 \times C_2 \times C_2$;\\
   12   &  $C_2 \times D_8 \times D_8$;\\
  12  &  $C_2 \times ((C_2 \times ((C_4 \times C_2) : C_2)) : C_2)$;\\
   12  &    $C_2 \times C_2 \times C_2 \times C_2 \times C_2$;\\
  12    &   $C_2 \times C_2 \times C_2 \times D_8$;\\
  12   &  $(C_2 \times C_2 \times C_2 \times D_8) : C_2$;\\
  12  &  $(C_4 \times C_2 \times C_2 \times C_2 \times C_2) : (C_2 \times A_5)$;\\ 
  14    &    $C_2 \times C_2 \times C_2 \times C_2 \times C_2$;\\
  14    &   $C_2 \times C_2 \times C_2 \times C_2 \times C_2$.\\
    \end{tabular}}
 \medskip 

 We have one graph with the automorphism group of order $7680$ and the structure description $(C_4 \times C_2 \times C_2 \times C_2 \times C_2) : (C_2 \times A_5)$.  This graph was created using one of conjugacy closed loops from the CC library of GAP package LOOPS~\cite{LOOPS}. Such a graph was not found in \cite{FI}.
 
 \subsection{q=3, d=3} 
 
Strongly regular graphs with  parameters $(378, 261, 180, 180)$ were first obtained from Construction B  of type $S6$ in the paper \cite{MM} by  M.~Muzychuk. 
By Construction~\ref{Con4} we found three strongly regular graphs with such parameters. 
There are three symmetric Latin square of side $14$. First one is the Cayley table of the cyclic group  $C_{14}$ and two others are the Cayley table of Steiner loops in the library of GAP package LOOPS \cite{LOOPS}.  Using this Latin squares we obtained three non-isomorphic strongly regular graphs with  parameters $(378, 261, 180, 180)$. Their $3$-ranks and the structural descriptions of the automorphism groups are given in the following table.
\medskip

{\small 
    \begin{tabular}{cl}
66 & $(C_3 \times C_3 \times C_3) : C_2$;\\ 

65 & $C_2 \times ((C_3 \times C_3 \times C_3) : C_2)$;\\ 

65 & $((A_9 \times A_9  \times A_9) : (C_2 \times S_4)$.\\
    \end{tabular}}
\medskip 

All graphs with parameters $(64,28,12,12)$ and $(378, 261, 180, 180)$ which were found in this section are available on the website \cite{DezaGraphs}.


\end{document}